\providecommand{\U}[1]{\protect\rule{.1in}{.1in}}
\newtheorem{theorem}{Theorem}
\newtheorem{corollary}[theorem]{Corollary}
\newtheorem{lemma}[theorem]{Lemma}
\newtheorem{proposition}[theorem]{Proposition}
\newtheorem{remark}[theorem]{Remark}
\newcommand\gr{\mathop{\rm gr}}
\begin{document}

\title[$L^{p}$-improving properties of certain singular measures on the Heisenberg group]{$L^{p}$-improving properties of certain singular measures on the Heisenberg group}
\author{Pablo Rocha}
\address{Departamento de Matem\'atica, Universidad Nacional del Sur, Av. Alem 1253 - Bah\'{\i}a Blanca 8000, Buenos
Aires, Argentina.}
\email{pablo.rocha@uns.edu.ar}
\thanks{\textbf{Key words and phrases}: Heisenberg group; singular Borel measure; $L^{p}$-improving property.}
\thanks{\textbf{2.020 Math. Subject Classification}: 43A80, 42A38.}

\begin{abstract} 
Let $\mu_A$ be the singular measure on the Heisenberg group $\mathbb{H}^{n}$ supported on the graph of the quadratic function 
$\varphi(y) = y^{t}Ay$, where $A$ is a $2n \times 2n$ real symmetric matrix. If $\det (2A \pm J) \neq 0$, we prove that the operator of convolution by $\mu_A$ on the right is bounded from $L^{\frac{2n+2}{2n+1}}(\mathbb{H}^{n})$ to $L^{2n+2}(\mathbb{H}^{n})$. We also study the type set  of the measures
$d\nu_{\gamma}(y,s) = \eta(y) |y|^{-\gamma} d\mu_{A}(y,s)$, for $0 \leq \gamma < 2n$, where $\eta$ is a cut-off function around the origin on 
$\mathbb{R}^{2n}$. Moreover, for $\gamma =0$ we characterize the type set of $\nu_{0}$.
\end{abstract}
\maketitle

\section{Introduction}

Let $I_n$ be the $n \times n$ identity matrix and $J$ be the $2n \times 2n$ skew-symmetric matrix given by
\begin{equation} \label{matrix J}
J= \left( \begin{array}{cc}
                           0 & I_n \\
                          -I_n & 0 \\
                                      \end{array} \right).
\end{equation}
\\ The Heisenberg group is $\mathbb{H}^{n} = \mathbb{R}^{2n} \times \mathbb{R}$ endowed with the group law (non-commutative)
\[
\left( x,t\right) \cdot \left(y,s\right)
=\left( x+y, \, t+s+\left\langle x,y\right\rangle \right),
\] 
where $\langle x,y \rangle$ is the standard symplectic form on $\mathbb{R}^{2n}$, i.e. $\langle x,y \rangle = x^{t} J y$,
with neutral element $(0,0)$, and with inverse $\left( x,t\right) ^{-1}=\left(-x,-t\right) $. The topology in $\mathbb{H}^{n}$ is the induced by $\mathbb{R}^{2n+1}$, so the borelian sets of $\mathbb{H}^{n}$
are identified with those of $\mathbb{R}^{2n+1}$. The Haar measure in $\mathbb{H}^{n}$ is the Lebesgue measure of $\mathbb{R}^{2n+1}$, thus $L^{p}(\mathbb{H}^{n}) \equiv L^{p}(\mathbb{R}^{2n+1})$.
Given a borelian function $f : \mathbb{H}^{n} \to \mathbb{C}$ and a Borel measure $\mu$ on $\mathbb{H}^{n}$, define the convolution by $\mu$ on the right by
\begin{equation}
(f \ast \mu) (x,t) = \int_{\mathbb{H}^{n}} f((x,t) \cdot (y,s)^{-1}) \, d\mu(y,s), \label{conv}
\end{equation}
provided the integral exists.

A Borel measure $\mu$ on the Heisenberg group $\mathbb{H}^{n}$ is said to be $L^{p}$-\textit{improving} if the operator
$T_{\mu} : f \mapsto f \ast \mu$ is bounded from $L^{p}(\mathbb{H}^{n})$ into $L^{q}(\mathbb{H}^{n})$ for some $1 \leq p < q < \infty$.

A remarkable fact is that singular measures can be $L^{p}$-improving. If in (\ref{conv}) we replace the Heisenberg group $\mathbb{H}^{n}$ by
$\mathbb{R}^{n}$ with the ordinary convolution in $\mathbb{R}^{n}$ and considering there $\mu = \eta \sigma_{M}$, where $\sigma_{M}$ is the surface measure on a given manifold $M$ (in $\mathbb{R}^{n}$) and $\eta$ is a smooth cut-off function, then the $L^{p}$-improving properties of a measure of this type are closely related to the existence of a certain amount of curvature of the manifold $M$ 
(see \cite{litt}, \cite{ober}, \cite{pan}). A similar result holds on general Lie groups (see Theorem 1.1, pp. 362 in \cite{ricci2}).

A more delicate problem consists in determining the exact range of pairs $(p,q)$ for which $L^{p} \ast \mu \subseteq L^{q}$ embeds continuously.
Given a manifold $M$ (in $\mathbb{H}^{n}$), define the \textit{type set} $E_{\eta \sigma_{M}}$ by
\[
E_{\eta \sigma_{M}} =\left\{ \left(\frac{1}{p}, \frac{1}{q} \right) \in [0,1] \times [0,1] : \| T_{\eta \sigma_{M}} \|_{p,q} < \infty \right\}.
\]
A very interesting survey of results concerning the type sets for convolution operators with singular measures in $\mathbb{R}^{n}$ can be found in \cite{ricci}.

In the $\mathbb{H}^{n}$ setting, S. Secco in \cite{secco} and \cite{secco2} obtained $L^{p}$-\textit{improving} properties of measures
supported on curves in $\mathbb{H}^{1}$, under certain assumptions. In \cite{ricci2}, F. Ricci and E. Stein showed that the type set of the
measure given by (\ref{nu}), for the case $\varphi \equiv 0$, $\gamma =0$ and $n=1$, is the triangle with vertices
$(0,0),$ $(1,1),$ and $\left( \frac{3}{4},\frac{1}{4}\right)$. In \cite{G-R} and \cite{G-R2}, the author jointly with T. Godoy generalized the work of Ricci and Stein for the case $\varphi(w) = w^{t}Aw = \sum_{j=1}^{n} \alpha_j |w_j|^{2}$, where $A$ is a $2n \times 2n$ real diagonal matrix such that $a_{ii} = a_{(i+1) (i+1)}$ for $i=2j-1$ with $j =1, 2, ..., n$, $\alpha_j = a_{(2j-1)(2j-1)}$, $w_j \in \mathbb{R}^{2}$, $0 \leq \gamma < 2n$, and $n \in \mathbb{N}$. There we also gave some examples of surfaces with degenerate curvature at the origin.

Let $\varphi : \mathbb{R}^{2n} \to \mathbb{R}$ be the function defined by $\varphi(y) = y^{t}Ay$, where $A$ is a $2n \times 2n$ real symmetric matrix. It is well known that if $A$ is an arbitrary matrix, then there exists a symmetric matrix $\widetilde{A}$ such that $y^{t}A y = y^{t}\widetilde{A} y$, for all $y$. We consider two borelian measures on $\mathbb{H}^{n}$ supported on the graph of $\varphi$ , $\mu_A$ and $\nu_{\gamma}$, $0 \leq \gamma < 2n$, given by
\begin{equation*}
\mu_A(E) = \int_{\mathbb{R}^{2n}} \, \chi_{E}(y, \varphi(y)) \, dy, 
\end{equation*}
and
\begin{equation}
\nu_{\gamma}(E) = \int_{\mathbb{R}^{2n}} \, \chi_{E}(y, \varphi(y)) \, \eta(y) \, |y|^{-\gamma} \, dy, \label{nu}
\end{equation}
where $\eta: \mathbb{R}^{2n} \to [0,1]$ is a smooth cut-off function such that $\eta(y) = 1$ if $|y| \leq 1$, $\eta(y) = 0$ if $|y| \geq 2$, and
$E$ is a borelian set of $\mathbb{H}^{n}$.
Let $T_{\mu_A}f = f \ast \mu_A$ and $T_{\nu_{\gamma}}f = f \ast \nu_{\gamma}$ be the operators of convolution by $\mu_A$ and 
$\nu_{\gamma}$ on the right respectively.

We are interested in studying the $L^{p}$-improving properties of the operator $T_{\mu_A}$ and in the characterization of the 
\textit{type set} $E_{\nu_{\gamma}}$. We point out that our measure $\mu_A$ is not the surface measure on the graph $\gr (\varphi)$ of $\varphi$, however the measures $\eta \mu_A$ and $\eta \sigma_{\gr (\varphi)}$ are equivalent, see Proposition 2 below, so $E_{\eta \mu_A} = 
E_{\eta \sigma_{\gr (\varphi)}}$. 

The following restrictions for the type sets $E_{\nu_{\gamma}}$, $0 \leq \gamma < 2n$, were proved in \cite{G-R} and \cite{G-R2} for the case 
$\varphi(w_1, ..., w_n) = \sum_{j=1}^{n} \alpha_j |w_j|^{2}$ with $w_j \in \mathbb{R}^{2}$. It is easy to see that such an argument works as well for our function $\varphi(y) =y^{t}Ay$.

Thus if $\left( \frac{1}{p},\frac{1}{q}\right) \in E_{\nu_{\gamma}}$, $0 \leq \gamma < 2n$, then
\begin{equation}
p\leq q, \,\,\,\,\,\,\,\,\,\,\,\, \frac{1}{q}\geq \frac{2n+1}{p}-2n, \,\,\,\,\,\,\,\,\,\,\,\,  \frac{1}{q}\geq\frac{1}{(2n+1)p}. 
\label{restriccion 1}
\end{equation}

Another necessary condition for the pair $(\frac{1}{p}, \frac{1}{q})$ to be in $E_{\nu_{\gamma}}$ is the following:
\begin{equation}
\frac{1}{q}\geq \frac{1}{p}-\frac{2n-\gamma }{2n+2}. \label{restriccion 2}
\end{equation}
This last condition is relevant only for the case $0 < \gamma < 2n$.

Let $D$ be the point of intersection, in the $(\frac{1}{p}, \frac{1}{q})$ plane, of the lines $\frac{1}{q}=\frac{2n+1}{p}
-2n$, $\frac{1}{q}=\frac{1}{p}-\frac{2n-\gamma }{2n+2}$, and let $D^{\prime }$ be its symmetric image with respect to the symmetry axis 
$\frac{1}{q} = 1 - \frac{1}{p}$. So

\[
D=\left( \frac{4n^{2}+2n+\gamma }{2n(2n+2)},\frac{2n+(2n+1)\gamma }{2n(2n+2)}%
\right) =\left( \frac{1}{p_{D}},\frac{1}{q_{D}}\right) \text{ and\
}D^{\prime }=\left( 1-\frac{1}{q_{D}},1-\frac{1}{p_{D}}\right).
\]
\\ Since $0 \leq \gamma < 2n$ it is clear that $\| T_{\nu_{\gamma}}f \|_{p} \leq c \| f \|_{p}$ for all Borel function $f \in L^{p}(\mathbb{H}^{n})$ and all $1 \leq p \leq \infty$, so $(\frac{1}{p}, \frac{1}{p}) \in E_{\mu_{\gamma}}$.

Thus for $0 < \gamma < 2n$ the set $E_{\nu_{\gamma}}$ is contained in the closed trapezoid with vertices $(0,0)$, $(1,1)$, $D$ and $D^{\prime }$, and the set $E_{\nu_{0}}$ is contained in the closed triangle with vertices $(0,0)$, $(1,1)$, and $(\frac{2n+1}{2n+2}, \frac{1}{2n+2})$.

In the Section 3, our main result appears. There we prove that the operator $T_{\mu_A}$ is bounded from $L^{\frac{2n+2}{2n+1}}(\mathbb{H}^{n})$ to $L^{2n+2}(\mathbb{H}^{n})$, see Theorem 3 below. This result allows us to characterize the type set $E_{\nu_{0}}$ as well as the interior of $E_{\nu_{\gamma}}$ for $0 < \gamma < 2n$. More precisely, we show that $E_{\nu_0}$ is the closed triangle with vertices $(0,0)$, $(1,1)$, and $(\frac{2n+1}{2n+2}, \frac{1}{2n+2})$ and the interior of $E_{\nu_{\gamma}}$ coincides with the interior of the closed trapezoid with vertices 
$(0,0)$, $(1,1)$, $D$ and $D^{\prime }$, see Theorem 4 and Theorem 6 below.

Throughout this paper, $c$ will denote a positive real constant not necessarily the same at each occurrence. The symbol $A \lesssim B$ stands for the inequality $A \leq cB$ for some constant $c$. We use the following convention for the Fourier transform in $\mathbb{R}^{n}$ 
$\widehat{f}(\xi) = \int f(x) e^{-i \xi \cdot x} dx$. The Fourier transform $\widehat{u}$ of a distribution $u$ on $\mathbb{R}^{n}$ is the distribution defined by $(\widehat{u}, \phi) = (u, \widehat{\phi} \, )$ for all rapidly decreasing functions $\phi$ on $\mathbb{R}^{n}$.

\section{Preliminaries}

In the sequel $J$ will denote the $2n \times 2n$ skew-symmetric matrix defined in (\ref{matrix J}). It is easy to check that 

\

\textbf{a)} $J^{2}=-I$. 

\

\textbf{b)} $J^{t} = -J$.

\

\textbf{c)} $x^{t}Jx = 0$, for all $x \in \mathbb{R}^{2n}$. 

\

\textbf{d)} $x^{t}Jy = -y^{t}Jx$, for all $x,y \in \mathbb{R}^{2n}$.

\begin{lemma}
Let $A$ be a $2n \times 2n$ real diagonal matrix. Then
\[
\det(A \pm J) = (a_{11} \, a_{(n+1)(n+1)} +1) \cdot (a_{22} \, a_{(n+2)(n+2)} +1) \cdot \cdot \cdot (a_{nn} \, a_{(2n)(2n)} +1),
\]
where the $a_{ii}$'s are the diagonal entries of $A$.
\end{lemma}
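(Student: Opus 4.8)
The plan is to reduce $A \pm J$ to a block-diagonal form by a simultaneous permutation of rows and columns. Write $A = \mathrm{diag}(a_{11}, \ldots, a_{(2n)(2n)})$ in the standard basis $e_1, \ldots, e_{2n}$ of $\mathbb{R}^{2n}$. From the definition (\ref{matrix J}) one has $J e_i = -e_{n+i}$ and $J e_{n+i} = e_i$ for $1 \le i \le n$, so each of the $n$ coordinate planes $V_i := \mathrm{span}\{e_i, e_{n+i}\}$ is invariant under both $A$ and $J$, hence under $A \pm J$.

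First I would record the matrix of $(A \pm J)|_{V_i}$ in the ordered basis $(e_i, e_{n+i})$: it is $\left(\begin{smallmatrix} a_{ii} & \pm 1 \\ \mp 1 & a_{(n+i)(n+i)} \end{smallmatrix}\right)$, whose determinant equals $a_{ii}\, a_{(n+i)(n+i)} + 1$ in both the $+$ and the $-$ case. Next, since $\mathbb{R}^{2n} = V_1 \oplus \cdots \oplus V_n$ is a decomposition into $(A \pm J)$-invariant subspaces, the determinant of $A \pm J$ is the product of the determinants of the restrictions; equivalently, conjugating by the permutation matrix that reorders the basis as $(e_1, e_{n+1}, e_2, e_{n+2}, \ldots, e_n, e_{2n})$ turns $A \pm J$ into the block-diagonal matrix with the above $2 \times 2$ blocks, and conjugation by a permutation matrix leaves the determinant unchanged. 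This yields $\det(A \pm J) = \prod_{i=1}^{n} (a_{ii}\, a_{(n+i)(n+i)} + 1)$, as claimed.

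There is no serious obstacle here; the only point requiring a line of care is checking the action of $J$ on the basis vectors from (\ref{matrix J}), so that the invariance of the planes $V_i$ is properly justified. Alternatively, one could split $A$ into $n \times n$ diagonal blocks $D_1, D_2$ and, on the dense open set where $D_2$ is invertible, use the Schur complement identity $\det\left(\begin{smallmatrix} D_1 & \pm I_n \\ \mp I_n & D_2 \end{smallmatrix}\right) = \det(D_2)\det(D_1 + D_2^{-1}) = \det(D_1 D_2 + I_n)$, extending to all diagonal $A$ by continuity of both sides as polynomials in the entries $a_{ii}$. I would keep this only as a remark, since the invariant-subspace argument is cleaner and case-free.
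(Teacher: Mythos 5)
Your proof is correct, but it proceeds differently from the paper's. The paper first reduces to a single sign via $\det(A+J)=\det((A+J)^{t})=\det(A-J)$ (valid since $A$ is symmetric and $J^{t}=-J$) and then simply asserts that the result follows by induction on $n$, presumably via a cofactor expansion that peels off one pair of indices at a time; no details are given. You instead observe that each coordinate plane $V_i=\mathrm{span}\{e_i,e_{n+i}\}$ is invariant under $A\pm J$ (using $Je_i=-e_{n+i}$, $Je_{n+i}=e_i$), so that a simultaneous permutation of rows and columns brings $A\pm J$ to block-diagonal form with $2\times 2$ blocks $\left(\begin{smallmatrix} a_{ii} & \pm 1 \\ \mp 1 & a_{(n+i)(n+i)}\end{smallmatrix}\right)$, each of determinant $a_{ii}a_{(n+i)(n+i)}+1$. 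This is case-free (both signs are handled at once, so the transpose reduction is not needed), avoids induction, and is actually more complete than the paper's sketch; your Schur-complement remark gives yet another valid route. The paper's inductive approach buys brevity on the page but leaves the actual computation to the reader, whereas yours makes the block structure that underlies the identity explicit.
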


\begin{proof}
Since $\det(A+J) = \det((A+J)^{t}) = \det(A-J)$, it is sufficient to prove the statement of the lemma for $\det(A+J)$.
Applying induction on $n$ the lemma follows.
\end{proof}

\begin{proposition}
Let $A$ be a $2n \times 2n$ real symmetric matrix. Then the graph of the function $\varphi(y) = y^{t}Ay$ generates all the group $\mathbb{H}^{n}$. Moreover, the measure $\nu_0 = \eta \mu_A$ is equivalent to the measure $\eta \sigma$, where $\eta$ is a cut-off function and $\sigma$ is the surface measure on the graph of $\varphi$.
\end{proposition}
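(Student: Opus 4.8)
The statement has two parts, and I would treat them in sequence. For the first part, the goal is to show that the graph $G = \{(y, y^{t}Ay) : y \in \mathbb{R}^{2n}\}$ generates $\mathbb{H}^{n}$ as a group. The key observation is that $G$ contains the origin $(0,0)$ and, since $\varphi(0) = 0$, the tangent directions at the origin span $\mathbb{R}^{2n} \times \{0\}$; so the subgroup generated by $G$ already contains all points $(y, 0)$ with $y$ in a neighborhood of $0$, hence (taking products and using that these generate $\mathbb{R}^{2n}$ as a group under addition when the center coordinate is ignored) a large piece of the horizontal slice. The crucial point is to produce a nonzero element of the center $\{(0,0,\dots,0,t)\}$. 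For this I would take two points $p = (y, y^{t}Ay)$ and $q = (z, z^{t}Az)$ on $G$ and compute the group commutator $p \cdot q \cdot p^{-1} \cdot q^{-1}$, which in $\mathbb{H}^{n}$ equals $(0, 2\langle y, z\rangle) = (0, 2 y^{t} J z)$; choosing $y, z$ with $y^{t}Jz \neq 0$ (possible since $J$ is nondegenerate) gives a nontrivial central element, and scaling/combining these yields the whole center. Together with the horizontal slice this gives all of $\mathbb{H}^{n}$.

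For the second part, the claim is that the measures $\eta \mu_A$ and $\eta \sigma$ are mutually absolutely continuous with Radon--Nikodym derivatives bounded above and below on the (compact, since $\eta$ is compactly supported) relevant set. Both measures are supported on $G \cap \mathrm{supp}(\eta)$, which is the graph of the smooth function $\varphi$ over the ball $\{|y| \leq 2\}$. I would parametrize $G$ by the map $y \mapsto (y, \varphi(y))$ and recall that the surface measure $\sigma$ pulls back to $\sqrt{1 + |\nabla \varphi(y)|^{2}}\, dy$, whereas by definition $\mu_A$ pulls back to $dy$. Hence $d(\eta\sigma) = \eta(y)\sqrt{1 + |\nabla\varphi(y)|^{2}}\, d(\eta\mu_A)$-formally, and since $\nabla\varphi(y) = 2Ay$ is continuous, the factor $\sqrt{1 + 4|Ay|^{2}}$ satisfies $1 \leq \sqrt{1 + 4|Ay|^{2}} \leq \sqrt{1 + 16\|A\|^{2}}$ on $\{|y| \leq 2\}$, so the two measures are equivalent with comparable total masses. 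This immediately gives $\|T_{\eta\mu_A}\|_{p,q} \approx \|T_{\eta\sigma}\|_{p,q}$ and hence $E_{\eta\mu_A} = E_{\eta\sigma}$.

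The only genuinely non-routine step is exhibiting the central element in the first part, i.e.\ recognizing that the commutator of two graph points lands in the center and invoking property (d) together with $\det J \neq 0$ to make it nontrivial; the rest is a standard parametrization-and-Jacobian comparison. I would write the first part carefully using the explicit group law and properties (a)--(d), and dispatch the second part in a couple of lines.
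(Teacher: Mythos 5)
Your treatment of the second half is exactly the paper's: parametrize the graph by $y\mapsto(y,\varphi(y))$, note that $\sigma$ pulls back to $\sqrt{1+|\nabla\varphi(y)|^{2}}\,dy$ while $\mu_A$ pulls back to $dy$, and bound the density between $1$ and a constant on $\{|y|\leq 2\}$ using $\nabla\varphi(y)=2Ay$; this gives the equivalence of $\eta\mu_A$ and $\eta\sigma$. For the first half you take a genuinely different (and arguably cleaner) route. The paper produces central elements from the evenness of $\varphi$ via $(w,\varphi(w))\cdot(-w,\varphi(-w))=(0,2\varphi(w))$, which forces a case split: if $A=0$ this yields nothing and a separate triple product of horizontal points is needed. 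Your commutator computation is correct --- with $p=(y,\varphi(y))$ and $q=(z,\varphi(z))$ one checks $p\cdot q\cdot p^{-1}\cdot q^{-1}=(0,2\langle y,z\rangle)$, the $\varphi$-terms cancelling identically --- and since $J$ is nondegenerate and $y$ may be rescaled, it yields the entire center uniformly in $A$, with no case distinction. One caveat: your opening claim that the generated subgroup contains a horizontal neighborhood of the origin ``because the tangent directions span'' is not a valid group-theoretic deduction (a product $(y,\varphi(y))\cdot(z,\varphi(z))$ is not horizontal in general), and it is also unnecessary. Once the full center is in hand, conclude as the paper does: $(y,0)=(y,\varphi(y))\cdot(0,-\varphi(y))$ and $(y,t)=(y,0)\cdot(0,t)$. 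With that sentence deleted and these two identities stated, your proof is complete.
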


\begin{proof}
The first statement will follow if we prove that $(x,0)$ and $(0,t)$ belong to the set $G_{\gr (\varphi)}$ generated by the graph $\gr (\varphi)$ of $\varphi$, since $(x,t) = (x,0) \cdot (0,t)$. It is clear that $(x, \varphi(x)) \in G_{\gr (\varphi)}$, so 
$(- t^{1/2}x, \varphi(t^{1/2}x)) = (-t^{1/2}x, \varphi(- t^{1/2}x)) \in G_{\gr (\varphi)}$ for all $x \in \mathbb{R}^{2n}$ and all $t >0$. From that it follows that $(0,t\varphi(x)) \in G_{\gr (\varphi)}$ for all $t >0$ and all $x$. If $A$ is a non-null matrix, then $(0,-t) = (0,t)^{-1} \in G_{\gr (\varphi)}$ and $(x, 0)= (x, \varphi(x)) \cdot (0, -\varphi(x)) \in G_{\gr (\varphi)}$. If $A$ is the null matrix, it is sufficient to prove that $(0,t) \in G_{\gr (\varphi)}$ for all $t$. Indeed, for $x$ and $y$ such that $\langle x , y \rangle \neq 0$ we have
$(0,t) = (x,0) \cdot (\frac{ty}{\langle x , y \rangle}, 0) \cdot (-x - \frac{ty}{\langle x , y \rangle}, 0) \in G_{\gr (\varphi)}$. 
So $G_{\gr (\varphi)} = \mathbb{H}^{n}$.  

For the second part of the proposition, we have that the surface measure on the graph of $\varphi$ is given by
\[
\sigma(E) = \int_{\phi^{-1}(E)} \sqrt{\det[(\partial_{x_i} \phi, \partial_{x_j} \phi)_{x}]} \, dx,
\]
where $\phi(x) =(x, \varphi(x))$ and $E$ is a borelian set of $\mathbb{R}^{2n+1}$ (see pp. 43-45 in \cite{Carmo}). A computation gives
\[
\det[(\partial_{x_i} \phi, \partial_{x_j} \phi)_{x}]= 1+ \sum_{j=1}^{2n} (\partial_{x_j} \varphi(x))^{2}, \,\, for \,\, all \,\, x.
\]
So
\[
\int_{\mathbb{R}^{2n}} \chi_{E}(\phi(x)) \eta(x) dx \leq \int_{\phi^{-1}(E)} \sqrt{\det[(\partial_{x_i} \phi, \partial_{x_j} \phi)_{x}]} \eta(x) \, dx \lesssim \int_{\mathbb{R}^{2n}} \chi_{E}(\phi(x)) \eta(x) dx. 
\]
Then $\nu_0$ is equivalent to $\eta \sigma$.
\end{proof}

The $\lambda$-twisted convolution is defined by

\[
(f \times_{\lambda} g) (x) = \int_{\mathbb{R}^{2n}} f(x-y) g(y) e^{-i \lambda  x^{t} J y } dy.
\]
\\ Given a $2n \times 2n$ real symmetric matrix $A$, we put

\[
e_{A}(x)= e^{ix^{t} Ax}.
\]
\\ It is easy to check, using the properties ${\bf b)}$ and ${\bf c)}$ of the matrix $J$, that

\[
(f \times_{\lambda} e_{\lambda A})(x)= e_{\lambda A}(x) (e_{\lambda A}(\cdot) f(\cdot)) \,\, \widehat{} \, \left(\lambda (2A+J)x \right),
\]
\\ where $\widehat{f}(\xi) = \int_{\mathbb{R}^{2n}} f(x) e^{-i x \cdot \xi} dx$ is the Fourier transform of $f$. 
Thus for each $f \in L^{1}(\mathbb{R}^{2n}) \cap L^{2}(\mathbb{R}^{2n})$ we have

\begin{equation} \label{producto twisted}
\left\Vert f \times_{\lambda} e_{\lambda A} \right\Vert_{L^{2}(\mathbb{R}^{2n})} = (2\pi)^{n} |\lambda|^{-n} |\det(2A\pm J) |^{-1/2} \left\Vert f \right\Vert_{L^{2}(\mathbb{R}^{2n})},
\end{equation}
\\ if  $\det(2A\pm J) \neq 0$.

\section{Main result}

To prove the $L^{\frac{2n+2}{2n+1}}(\mathbb{H}^{n})-L^{2n+2} (\mathbb{H}^{n})$ boundedness of the operator $T_{\mu_A}$ we embed our operator in an analytic family $\{ T_z \}$ of operators on the strip $-n \leq \Re(z) \leq 1$, and then we apply the complex interpolation theorem.  

\begin{theorem} If $\det(2A \pm J) \neq 0$, then the operator $T_{\mu_A}$ is bounded from $L^{\frac{2n+2}{2n+1}}(\mathbb{H}^{n})$ to  $L^{2n+2} (\mathbb{H}^{n})$. 
\end{theorem}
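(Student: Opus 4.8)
The plan is to prove this theorem by analytic interpolation, following the strategy announced just before the statement: embed $T_{\mu_A}$ into an analytic family $\{T_z\}_{-n \le \Re z \le 1}$ of convolution operators, establish $L^2 \to L^2$ bounds with admissible growth on the line $\Re z = -n$ and $L^1 \to L^\infty$ bounds on the line $\Re z = 1$, and then invoke Stein's complex interpolation theorem. The natural family is obtained by replacing the measure $\mu_A$ with the family of (tempered) distributions whose action incorporates a factor like $|y|^{2(z+n)}$ (suitably normalized, e.g. divided by a Gamma factor so the family is entire), so that at $z=-n$ one recovers $\mu_A$ up to a harmless constant; the precise choice of exponent is dictated by wanting the homogeneity to land $T_z$ at $L^1 \to L^\infty$ when $\Re z = 1$, which forces the Lebesgue-measure-like endpoint and, on the strip, gives the interpolated exponents $\left(\tfrac{1}{p},\tfrac{1}{q}\right) = \left(\tfrac{2n+1}{2n+2}, \tfrac{1}{2n+2}\right)$ at $\theta$ chosen so that $z=0$, which is exactly the target pair.

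The key computational input is the twisted-convolution identity and the $L^2$ norm formula \eqref{producto twisted}. I would first take the Euclidean Fourier transform in the central variable $t \mapsto \lambda$, which turns right convolution on $\mathbb{H}^n$ into a field of $\lambda$-twisted convolutions on $\mathbb{R}^{2n}$ against the exponential $e_{\lambda A}(y) = e^{i\lambda y^t A y}$ (possibly weighted by $\eta(y)|y|^{-\gamma}$-type factors in the analytic family, but here with the plain measure $\mu_A$, one gets convolution with $e_{\lambda A}$ itself, as in the displayed formula preceding \eqref{producto twisted}). For the $L^2$ endpoint, Plancherel in $t$ reduces matters to a uniform-in-$\lambda$ bound on the twisted-convolution operator $f \mapsto f \times_\lambda (\text{weighted } e_{\lambda A})$, where the unweighted case is governed precisely by \eqref{producto twisted}: the operator norm is $(2\pi)^n |\lambda|^{-n} |\det(2A \pm J)|^{-1/2}$. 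On the line $\Re z = -n$ the weight in the analytic family should be arranged so that the resulting $\lambda$-power, after integrating back in $\lambda$, is exactly cancelled; the standard device is to also insert a compensating factor $|\lambda|^{\alpha(z)}$ into the definition of $T_z$ so that $T_{-n}$ is bounded on $L^2(\mathbb{H}^n)$ with a constant controlled by $|\det(2A\pm J)|^{-1/2}$ and by the Gamma normalization (which has at most polynomial growth in $\Im z$ on vertical lines, as required by Stein interpolation).

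For the other endpoint, $\Re z = 1$, the analytic family is designed so that $T_z$ becomes convolution against an $L^\infty$ kernel — concretely, the weight $|y|^{2(z+n)}$ at $\Re z = 1$ is a locally bounded (indeed locally integrable against the relevant measure, after the Fourier-in-$t$ picture) function, and convolution with a bounded compactly-behaved kernel maps $L^1 \to L^\infty$ with norm $\lesssim$ (a Gamma factor)$^{-1}$; again one must check the vertical growth is polynomial. Interpolating between $\Re z = -n$ and $\Re z = 1$ with the parameter $\theta$ that selects $z = 0$ gives $\tfrac1p = 1-\theta + \theta\cdot\tfrac12$-type bookkeeping that yields $\left(\tfrac{2n+1}{2n+2}, \tfrac{1}{2n+2}\right)$, i.e. $p = \tfrac{2n+2}{2n+1}$, $q = 2n+2$, which is the claim. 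The main obstacle I anticipate is the careful construction of the analytic family: one must choose the $|y|$-power and the $|\lambda|$-power (and their Gamma-function normalizations) so that (i) the family is analytic on the whole strip with only polynomial growth on vertical lines, (ii) it genuinely reduces to $c\,T_{\mu_A}$ at $z=0$ (not merely at an endpoint), and (iii) both endpoint estimates hold with the hypothesis $\det(2A\pm J)\neq 0$ entering exactly at the $L^2$ end through \eqref{producto twisted}. Verifying the $L^1 \to L^\infty$ bound rigorously — i.e. that the relevant kernel really is essentially bounded after the central Fourier transform, uniformly in the auxiliary parameters — is the most delicate point, since the singular support of $\mu_A$ on the graph of $\varphi$ means one is analytically continuing a genuinely singular object and must track where the continued kernel ceases to be a function.
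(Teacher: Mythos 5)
Your overall strategy is the paper's: embed $T_{\mu_A}$ in an analytic family on the strip $-n\le\Re z\le 1$, get $L^1\to L^\infty$ at $\Re z=1$ and $L^2\to L^2$ at $\Re z=-n$ via the partial Fourier transform in $t$ and the twisted-convolution identity \eqref{producto twisted} (which is indeed where $\det(2A\pm J)\neq 0$ enters), normalize by Gamma factors for admissible growth, and interpolate to land at $\left(\frac{2n+1}{2n+2},\frac{1}{2n+2}\right)$. But the construction of the family --- which is the heart of the proof --- does not work as you describe it. You propose to weight the measure by a power $|y|^{2(z+n)}$ of the \emph{base} variable. Any such weight leaves the distribution supported on the graph of $\varphi$, a $2n$-dimensional set of Lebesgue measure zero in $\mathbb{H}^{n}\cong\mathbb{R}^{2n+1}$; a bounded function times a singular measure is still a singular measure, so convolution with it can never satisfy an $L^1\to L^\infty$ bound, no matter what value of $z$ you choose. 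The claim that ``the weight at $\Re z=1$ is locally bounded, and convolution with a bounded kernel maps $L^1\to L^\infty$'' conflates boundedness of the density with respect to the surface measure with boundedness with respect to Lebesgue measure on the group. The regularization must act \emph{transversally} to the graph: the paper sets $T_zf=f\ast\mu_A\ast U_z$ with $U_z=\delta_{\mathbb{R}^{2n}}\otimes I_z$ and $I_z(s)=\frac{2^{-z/2}}{\Gamma(z/2)}|s|^{z-1}$, so that $\mu_A\ast U_{1+ib}(x,t)=\frac{2^{-(1+ib)/2}}{\Gamma((1+ib)/2)}|t-\varphi(x)|^{ib}$ is genuinely a bounded function on all of $\mathbb{H}^{n}$, while on the $L^2$ side $\widehat{I_{-n+ib}}(\lambda)=cI_{1+n-ib}(\lambda)\sim|\lambda|^{n}$ exactly cancels the $|\lambda|^{-n}$ in \eqref{producto twisted}. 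Your suggested compensating factor $|\lambda|^{\alpha(z)}$ is in fact this same central multiplier in disguise, but it must replace the $|y|$-weight, not supplement it.

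There is also an internal inconsistency in your bookkeeping: you say the family recovers $\mu_A$ at $z=-n$, yet you then interpolate ``with the parameter $\theta$ that selects $z=0$.'' If $T_{\mu_A}$ sat at the endpoint $z=-n$, interpolation would tell you nothing new about it; for the argument to produce the $\left(\frac{2n+2}{2n+1},2n+2\right)$ bound, the operator of interest must sit at the \emph{interior} point $z=0$. In the paper's family this is arranged by the normalization $I_z=\frac{2^{-z/2}}{\Gamma(z/2)}|s|^{z-1}$, whose residue computation gives $I_0=\delta$ and hence $T_0=T_{\mu_A}$ exactly; the strip endpoints $-n$ and $1$ then give $\theta=\frac{n}{n+1}$ at $z=0$ and the stated exponents. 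As written, your proposal would need the family rebuilt from scratch along these lines before the interpolation step could be carried out.
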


\begin{proof} To prove the statement of the theorem we consider the family $\{ |s|^{z-1} \}$ of functions initially defined when $\Re(z)>0$ and $s\in \mathbb{R} \setminus\{ 0 \}$. This family of functions can be extended, in the $z$ variable, to an analytic family of distributions on $\mathbb{C} \setminus \{ -2k : k \in \mathbb{N} \cup \{0 \} \}$. By abuse of notation, we denote this extension by $|s|^{z-1}$. The family 
$\{ |s|^{z-1} \}$ have simple poles in $z = -2k$ for $k \in \mathbb{N} \cup \{0 \}$. Since the meromorphic continuation of the function $\Gamma \left( \frac{z}{2}\right)$ (we keep the notation for his continuation) has simple poles at the same points (i.e. $z = -2k$), the family $\{ I_z \}$ of distributions defined by
\begin{equation}
I_{z}(s)=\frac{2^{-\frac{z}{2}}}{\Gamma \left( \frac{z}{2}\right) }   
| s | ^{z-1}  \label{iz}
\end{equation}
results in an entire family of distributions (see pp. 55-56 in \cite{Gelfand}).

From this construction, and by taking the ratios of the corresponding residues at $z=0$, we have $I_{0} = \delta$,  where $\delta $ is the Dirac distribution at the origin on $\mathbb{R}$ (see equation (3), pp. 57 in \cite{Gelfand}), also $\widehat{I_{z}}= cI_{1-z}$ for some real constant $c$ independent of $z$ (see equation ($12'$), pp. 173 in \cite{Gelfand}).

For $z \in \mathbb{C}$, we also define $U_{z}$ as the distribution on $\mathbb{H}^{n}$ given by the tensor product
$$U_{z} = \delta_{\mathbb{R}^{2n}} \otimes I_{z},$$ where $\delta_{\mathbb{R}^{2n}}$ is the Dirac
distribution at the origin on $\mathbb{R}^{2n}$ and $I_z$ is given by (\ref{iz}).

Let $\{ T_z \}$ be the analytic family of operators on the strip $-n \leq \Re(z) \leq 1$, given by
\[
T_z f = f \ast \mu_A \ast U_{z}.
\]
It is clear that $T_{0} =  T_{\mu_A}$. For $\Re(z)=1$ we have
\[
\left\Vert T_{z}f \right\Vert_{\infty} = \left\Vert f \ast \mu_A \ast U_{z} \right\Vert_{\infty} \leq
\left\Vert f \right\Vert_{1} \left\Vert \mu_A \ast U_{z} \right\Vert_{\infty}.
\]
Since $\mu_A \ast U_{1+ib} (x,t) = I_{1+ib} (t- \varphi(x)) =
\frac{2^{-(1+ib)/2}}{\Gamma((1+ib)/2)} |t- \varphi(x)|^{ib}$ it follows that
\[
\left\Vert T_{1+ib} \right\Vert_{1, \infty} \leq \left| \frac{2^{-(1+ib)/2}}{\Gamma((1+ib)/2)} \right|, \,\, \forall \, b \in \mathbb{R}.
\]
For $\Re(z)=-n$, we will prove that the operator $T_{z}$ is bounded on $L^{2}(\mathbb{H}^{n})$. This is equivalent to showing that 
\[
\int_{\mathbb{R}^{2n}} \, \left| (T_{z}f)^{\lambda}(x) \right|^{2} \, dx \leq c \int_{\mathbb{R}^{2n}} \, \left| f^{\lambda}(x) \right|^{2} \, dx,
\]
where $h^{\lambda}(x):= \int_{\mathbb{R}} h(x,t) e^{-i\lambda t} \, dt.$ A computation gives
\[
(T_{-n+ib}f)^{\lambda}(x) = \widehat{I_{-n+ib}}(\lambda) \int_{\mathbb{R}^{2n}} \, f^{\lambda}(x-y) e_{\lambda A}(y) 
e^{-i \lambda x^{t}J y} dy
\]
\[= \widehat{I_{-n+ib}}(\lambda) \, \left(f^{\lambda} \times_{\lambda} e_{\lambda A} \right)(x).
\]
From the identity in (\ref{producto twisted}) and since $\widehat{I_{z}}= c I_{1-z}$, we get
\[
\left\Vert (T_{-n+ib}f)^{\lambda} \right\Vert_{L^{2}(\mathbb{R}^{2n})} = \left| \frac{c2^{-(1+n-ib)/2}}{\Gamma \left(\frac{1+n-ib}{2} \right)} \right| (2\pi)^{n} |\det(2A \pm J)|^{-1/2} \Vert f^{\lambda} \Vert_{L^{2}(\mathbb{R}^{2n})},
\]
for each $b \in \mathbb{R}$. So $T_{-n+ib}$ is bounded on $L^{2}(\mathbb{H}^{n})$ if $\det(2A \pm J) \neq 0$. Finally, it is easy to see, with the aid of the Stirling formula (see e.g. \cite{stein4}), that the family $\left\{ T_{z}\right\} $ satisfies, 
on the strip $-n\leq \Re(z)\leq 1$, the hypothesis of the complex interpolation theorem (see \cite{stein2}, pp. 205) and 
so $T_{0}= T_{\mu_A}$ is bounded from $L^{\frac{2n+2}{2n+1}}(\mathbb{H}^{n})$ into $L^{2n+2}(\mathbb{H}^{n})$. 
\end{proof}

\begin{theorem} Let $\nu_0$ be the measure defined by $(\ref{nu})$ with $\gamma = 0$. If $\det(2A \pm J) \neq 0$, then the type set $E_{\nu_0}$ is the closed triangle with vertices $(0,0)$, $( 1,1)$, and $\left( \frac{2n+1}{2n+2},\frac{1}{2n+2}\right)$.
\end{theorem}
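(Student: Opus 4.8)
The plan is to obtain the characterization from two ingredients that are already available: the containment of $E_{\nu_0}$ in a triangle, which has been recorded above as a consequence of the necessary conditions $(\ref{restriccion 1})$, and Theorem 3, which will produce the non-trivial vertex. Write $\mathcal{T}$ for the closed triangle with vertices $(0,0)$, $(1,1)$ and $(\frac{2n+1}{2n+2},\frac{1}{2n+2})$; recall that the last point is exactly the common solution of $\frac{1}{q}=\frac{2n+1}{p}-2n$ and $\frac{1}{q}=\frac{1}{(2n+1)p}$, so the three inequalities of $(\ref{restriccion 1})$ cut out precisely $\mathcal{T}$ and hence $E_{\nu_0}\subseteq\mathcal{T}$. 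It therefore remains to prove $\mathcal{T}\subseteq E_{\nu_0}$.

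First I would dispose of the two trivial vertices. Since $\eta$ is bounded and compactly supported, $\nu_0$ is a finite Borel measure on $\mathbb{H}^n$, so Minkowski's integral inequality gives that $T_{\nu_0}$ is bounded on $L^1(\mathbb{H}^n)$ and on $L^\infty(\mathbb{H}^n)$; thus $(1,1)\in E_{\nu_0}$ and $(0,0)\in E_{\nu_0}$. Neither of these requires the hypothesis $\det(2A\pm J)\ne 0$.

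For the vertex $(\frac{2n+1}{2n+2},\frac{1}{2n+2})$ I would compare $\nu_0$ with $\mu_A$ and invoke Theorem 3. Because $0\le\eta\le 1$, we have $\nu_0(E)\le\mu_A(E)$ for every Borel set $E$, hence $\int g\,d\nu_0\le\int g\,d\mu_A$ for every nonnegative Borel function $g$ on $\mathbb{H}^n$. Taking $g(y,s)=|f|((x,t)\cdot(y,s)^{-1})$ yields the pointwise bound $|T_{\nu_0}f|=|f\ast\nu_0|\le|f|\ast\nu_0\le|f|\ast\mu_A=T_{\mu_A}(|f|)$, valid for $f\in C_c(\mathbb{H}^n)$, where all the integrals converge absolutely. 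Taking $L^{2n+2}(\mathbb{H}^n)$ norms and using Theorem 3 (this is the only point where $\det(2A\pm J)\ne 0$ is used) gives $\|T_{\nu_0}f\|_{2n+2}\le\|T_{\mu_A}(|f|)\|_{2n+2}\lesssim\|f\|_{\frac{2n+2}{2n+1}}$, and a density argument extends this inequality to all of $L^{\frac{2n+2}{2n+1}}(\mathbb{H}^n)$. Hence $(\frac{2n+1}{2n+2},\frac{1}{2n+2})\in E_{\nu_0}$.

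Finally, since $T_{\nu_0}$ is a fixed linear operator and the three points $(0,0)$, $(1,1)$, $(\frac{2n+1}{2n+2},\frac{1}{2n+2})$ all lie in $E_{\nu_0}$, the Riesz--Thorin interpolation theorem shows that the segment joining any two of them lies in $E_{\nu_0}$; these three non-collinear points then generate the closed triangle, so $\mathcal{T}\subseteq E_{\nu_0}$. Combined with $E_{\nu_0}\subseteq\mathcal{T}$ this gives $E_{\nu_0}=\mathcal{T}$, as asserted. I do not expect a genuine obstacle in this argument: all the analytic work has been absorbed into Theorem 3, and what remains --- the monotonicity $\nu_0\le\mu_A$, the finiteness of $\nu_0$, and the interpolation --- is routine; the only step deserving a little care is checking that the three inequalities in $(\ref{restriccion 1})$ describe exactly $\mathcal{T}$, with no spurious extra region.
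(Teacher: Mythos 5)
Your argument is correct and follows exactly the route of the paper's own (much more terse) proof: the necessary conditions in (\ref{restriccion 1}) give $E_{\nu_0}\subseteq\mathcal{T}$, the pointwise domination $T_{\nu_0}f\le T_{\mu_A}f$ for $f\ge 0$ together with Theorem 3 gives the vertex $\left(\frac{2n+1}{2n+2},\frac{1}{2n+2}\right)$, and Riesz--Thorin interpolation with the trivial $L^1$ and $L^\infty$ bounds fills in the triangle. Your write-up merely makes explicit the details the paper leaves to the reader.
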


\begin{proof}
Since the following inequality
$$T_{\nu_0}f \leq T_{\mu_A}f$$ holds for each borelian function $f \geq 0$, the theorem follows from the restrictions that appear in (\ref{restriccion 1}), Theorem 3 and the Riesz convexity Theorem.
\end{proof}

\begin{corollary}
If $\det(2A \pm J) \neq 0$, then the operator $T_{\mu_A}$ is bounded from $L^{p}(\mathbb{H}^{n})$ into $L^{p}(\mathbb{H}^{n})$ if and only if 
$p= \frac{2n+2}{2n+1}$ and $q= 2n+2$.
\end{corollary}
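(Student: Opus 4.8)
The plan is to read the statement as: $T_{\mu_A}$ is bounded from $L^{p}(\mathbb{H}^{n})$ into $L^{q}(\mathbb{H}^{n})$ if and only if $p=\frac{2n+2}{2n+1}$ and $q=2n+2$. The ``if'' direction is exactly Theorem 3, so all the work is in the ``only if'' direction, which I would get by combining a scaling (homogeneity) argument with the necessary conditions already recorded in $(\ref{restriccion 1})$.

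For the scaling argument, introduce the non-isotropic Heisenberg dilations $\delta_{r}(x,t)=(rx,r^{2}t)$, $r>0$. Each $\delta_{r}$ is an automorphism of $\mathbb{H}^{n}$ because $\langle rx,ry\rangle=r^{2}\langle x,y\rangle$, and since $\varphi(ry)=r^{2}\varphi(y)$ one has $\delta_{r}(y,\varphi(y))=(ry,\varphi(ry))$, so $\delta_{r}$ preserves $\gr(\varphi)$. Writing $f_{r}=f\circ\delta_{r}$ and using that $\delta_{r}$ is a homomorphism together with the change of variables $z=ry$ in the defining integral $(\ref{conv})$, I would check the covariance identity $f_{r}\ast\mu_{A}=r^{-2n}\big((f\ast\mu_{A})\circ\delta_{r}\big)$. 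Since also $\|g\circ\delta_{r}\|_{L^{p}(\mathbb{H}^{n})}=r^{-(2n+2)/p}\|g\|_{L^{p}(\mathbb{H}^{n})}$, applying a hypothetical bound $\|T_{\mu_A}g\|_{q}\le C\|g\|_{p}$ to $g=f_{r}$ yields $\|T_{\mu_A}f\|_{q}\le C\,r^{\,2n+(2n+2)/q-(2n+2)/p}\,\|f\|_{p}$ for every $r>0$. Because $\mu_{A}\neq 0$ forces $T_{\mu_A}$ to be a nonzero operator (for a nonnegative bump $f$ centered at the origin, $(f\ast\mu_A)(y_{0},\varphi(y_{0}))>0$ since the integrand is nonnegative and positive where the argument equals the identity), letting $r\to 0^{+}$ or $r\to\infty$ shows the exponent of $r$ must vanish, i.e. $\frac{1}{p}-\frac{1}{q}=\frac{2n}{2n+2}=\frac{n}{n+1}$.

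It remains to intersect this line with the necessary conditions. Since $T_{\nu_0}f\le T_{\mu_A}f$ for every borelian $f\ge 0$ (and $\nu_0\ge 0$), boundedness of $T_{\mu_A}$ from $L^{p}$ to $L^{q}$ forces $\left(\frac{1}{p},\frac{1}{q}\right)\in E_{\nu_0}$, which by Theorem 4 (equivalently, by the conditions $\frac{1}{q}\ge\frac{2n+1}{p}-2n$ and $\frac{1}{q}\ge\frac{1}{(2n+1)p}$ from $(\ref{restriccion 1})$) is contained in the closed triangle with vertices $(0,0)$, $(1,1)$, $\left(\frac{2n+1}{2n+2},\frac{1}{2n+2}\right)$. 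Substituting $\frac{1}{q}=\frac{1}{p}-\frac{n}{n+1}$ into $\frac{1}{q}\ge\frac{1}{(2n+1)p}$ gives $\frac{1}{p}\ge\frac{2n+1}{2n+2}$, while substituting it into $\frac{1}{q}\ge\frac{2n+1}{p}-2n$ gives $\frac{1}{p}\le\frac{2n+1}{2n+2}$. Hence $\frac{1}{p}=\frac{2n+1}{2n+2}$ and $\frac{1}{q}=\frac{1}{2n+2}$, i.e. $p=\frac{2n+2}{2n+1}$ and $q=2n+2$, which is precisely the pair at which Theorem 3 provides the bound. This establishes both implications.

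The only genuinely computational step is the dilation covariance $f_{r}\ast\mu_{A}=r^{-2n}(f\ast\mu_{A})\circ\delta_{r}$; it is routine but must be carried out carefully, since it relies simultaneously on $\varphi$ being $2$-homogeneous and on $\langle\cdot,\cdot\rangle$ scaling like $r^{2}$ — exactly the two facts that make $\delta_{r}$ a group automorphism fixing $\gr(\varphi)$. Everything else is elementary plane geometry plus the two cited theorems.
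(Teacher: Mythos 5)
Your proposal is correct and follows essentially the same route as the paper: the ``if'' direction is Theorem 3, and the ``only if'' direction combines the dilation covariance $(T_{\mu_A}f)_{\delta}=\delta^{2n}T_{\mu_A}(f_{\delta})$ (forcing $\frac{1}{q}=\frac{1}{p}-\frac{2n}{2n+2}$) with the domination $T_{\nu_0}f\le T_{\mu_A}f$ and Theorem 4 to pin down the unique admissible pair. Your version merely makes explicit two points the paper leaves implicit (that $T_{\mu_A}\neq 0$, and the algebra of intersecting the scaling line with the triangle), and correctly reads the corollary's statement as $L^{p}\to L^{q}$ despite the typo.
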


\begin{proof}
The "\textit{if}" part of the corollary is the theorem 3. To see the reciprocal we introduce the action of the dilation group 
$\mathbb{R}^{>0}$ on $\mathbb{H}^{n}$, i.e.: $\delta \cdot (x,t) =(\delta x, \delta^{2}t)$, $\delta > 0$. For a function $f$ defined on $\mathbb{H}^{n}$ we put $f_{\delta}(x,t) = f(\delta \cdot (x,t))$. It is easy to check that
\[
(T_{\mu_A} f)_{\delta} = \delta^{2n} \, T_{\mu_A} (f_{\delta}).
\]
If $\| T_{\mu_A} f\|_q \leq c_{p,q} \|f \|_p$, then 
\[
\delta^{-\frac{2n+2}{q}} \| T_{\mu_A} f\|_q = \| (T_{\mu_A} f)_{\delta}\|_q = \delta^{2n} \| T_{\mu_A} (f_{\delta} )\|_q\leq \delta^{2n} c \|f_{\delta} \|_p = \delta^{2n - \frac{2n+2}{p}} c \| f \|_p,
\] 
for all $\delta >0$. So $\frac{1}{q} = \frac{1}{p} -\frac{2n}{2n+2}$. Since $T_{\nu_0}f \leq T_{\mu_A}f$ for $f \geq 0$. From Theorem 4 it follows that $p= \frac{2n+2}{2n+1}$ and $q= 2n+2$.
\end{proof}

We recall that
\[
D=\left( \frac{4n^{2}+2n+\gamma }{2n(2n+2)},\frac{2n+(2n+1)\gamma }{2n(2n+2)}%
\right) =\left( \frac{1}{p_{D}},\frac{1}{q_{D}}\right) \text{ and\
}D^{\prime }=\left( 1-\frac{1}{q_{D}},1-\frac{1}{p_{D}}\right).
\]

\begin{theorem}
Let $\nu_{\gamma}$ be the measure defined by $(\ref{nu})$ with $0 < \gamma < 2n$. If $\det(2A \pm J) \neq 0$, then the type set $E_{\nu_{\gamma}}$ is contained in the closed trapezoid with vertices $(0,0)$, $(1,1)$, $D$ and $D'$ with the only possible exception of the closed segment joining the two points
$D$ and $D'$.
\end{theorem}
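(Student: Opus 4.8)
The plan is to combine the necessary conditions already recorded in the Introduction with a dyadic decomposition of $\nu_{\gamma}$ about the origin; each dyadic piece will be, up to a scalar factor and a Heisenberg dilation, a fixed compactly supported ``unit-scale'' measure whose mapping properties come from Theorem 3. By the restrictions (\ref{restriccion 1}) and (\ref{restriccion 2}), $E_{\nu_{\gamma}}$ is already contained in the closed trapezoid $\mathcal{Q}$ with vertices $(0,0)$, $(1,1)$, $D$, $D'$, whose edge $\overline{DD'}$ lies on the line $L:\ \frac1q=\frac1p-\frac{2n-\gamma}{2n+2}$, while the two slanted edges lie on $\frac1q=\frac{2n+1}{p}-2n$ (through $(1,1)$ and $D$) and on $\frac1q=\frac{1}{(2n+1)p}$ (through $(0,0)$ and $D'$). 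An elementary computation shows that $\mathcal{Q}$ is obtained from the closed triangle $\mathcal{T}$ with vertices $(0,0)$, $(1,1)$, $P_{0}:=\big(\tfrac{2n+1}{2n+2},\tfrac{1}{2n+2}\big)$ by deleting the open corner triangle that $L$ cuts off at $P_{0}$: since $\gamma>0$, the line $L$ passes strictly above $P_{0}$ and meets the two slanted lines exactly at $D$ and $D'$. Hence it suffices to show that $T_{\nu_{\gamma}}$ maps $L^{p}$ boundedly into $L^{q}$ whenever $(\tfrac1p,\tfrac1q)\in\mathcal{T}$ lies strictly above $L$, since that set is precisely $\mathcal{Q}\setminus\overline{DD'}$.

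For the decomposition, fix $\psi\in C_{c}^{\infty}(\mathbb{R}^{2n})$ supported in $\{\tfrac12\le|y|\le 2\}$ with $\sum_{j\ge -1}\psi(2^{j}y)=1$ on $\{0<|y|<2\}$, and write $\nu_{\gamma}=\sum_{j\ge -1}\nu_{\gamma}^{j}$, where $\nu_{\gamma}^{j}$ is the measure supported on $\gr(\varphi)$ with density $\eta(y)\psi(2^{j}y)|y|^{-\gamma}$ in the parametrization $y\mapsto(y,\varphi(y))$. Let $\rho$ be the measure with density $\psi(z)|z|^{-\gamma}$; it is finite, compactly supported, and has bounded density supported away from the origin, so $\rho\le C\mu_{A}$ as measures. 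Hence for $f\ge0$ one has $f\ast\rho\le C\,f\ast\mu_{A}$, and Theorem 3 gives $P_{0}\in E_{\rho}$, while $(0,0),(1,1)\in E_{\rho}$ because $\rho$ is finite; by the Riesz convexity Theorem, $\mathcal{T}\subseteq E_{\rho}$, i.e.\ $\|\rho\|_{p\to q}<\infty$ for every $(\tfrac1p,\tfrac1q)\in\mathcal{T}$. The same argument applies to the two fixed pieces $\nu_{\gamma}^{-1}$ and $\nu_{\gamma}^{0}$, which also have bounded density supported in a fixed annulus away from the origin.

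The scaling step uses that $\varphi(\delta y)=\delta^{2}\varphi(y)$, so $\gr(\varphi)$ is invariant under the dilations $\delta\cdot(x,t)=(\delta x,\delta^{2}t)$ from the proof of the Corollary. For $j\ge 1$ the support of $y\mapsto\eta(y)\psi(2^{j}y)|y|^{-\gamma}$ is contained in $\{|y|\le 1\}$, so $\eta\equiv 1$ there, and the substitution $y=2^{-j}z$ gives $\nu_{\gamma}^{j}=2^{j(\gamma-2n)}\rho_{j}$, where $\rho_{j}$ is the push-forward of $\rho$ under $w\mapsto 2^{-j}\!\cdot\!w$. Writing $g_{\delta}(x,t)=g(\delta x,\delta^{2}t)$ and using the identities $f\ast\rho_{j}=\big((f_{2^{-j}})\ast\rho\big)_{2^{j}}$ and $\|g_{\delta}\|_{r}=\delta^{-(2n+2)/r}\|g\|_{r}$, one obtains, for $(\tfrac1p,\tfrac1q)\in\mathcal{T}$,
\[
\|f\ast\nu_{\gamma}^{j}\|_{q}\ \le\ 2^{\,j\left[(\gamma-2n)+(2n+2)\left(\frac1p-\frac1q\right)\right]}\|\rho\|_{p\to q}\,\|f\|_{p},\qquad j\ge 1 .
\]
Therefore $\sum_{j\ge -1}\|f\ast\nu_{\gamma}^{j}\|_{q}$ converges precisely when $(\gamma-2n)+(2n+2)(\tfrac1p-\tfrac1q)<0$, that is, when $(\tfrac1p,\tfrac1q)$ lies strictly above $L$; for such pairs Minkowski's inequality gives $\|f\ast\nu_{\gamma}\|_{q}\le\sum_{j}\|f\ast\nu_{\gamma}^{j}\|_{q}\lesssim\|f\|_{p}$, first for $f\ge0$ and then for general $f$ via $|f\ast\nu_{\gamma}|\le|f|\ast\nu_{\gamma}$. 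This proves $\mathcal{Q}\setminus\overline{DD'}\subseteq E_{\nu_{\gamma}}$, which together with $E_{\nu_{\gamma}}\subseteq\mathcal{Q}$ is the assertion.

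The only real obstacle is the endpoint. On the segment $\overline{DD'}$, which lies on $L$, the exponent in the displayed estimate vanishes, so the dyadic series diverges and this crude summation cannot reach $\overline{DD'}$ --- which is exactly why that segment is left out as a ``possible exception''. Deciding whether $\overline{DD'}\subseteq E_{\nu_{\gamma}}$ would need a sharper tool, such as a Lorentz-space endpoint estimate or an almost-orthogonality argument among the pieces $\nu_{\gamma}^{j}$, and is not pursued here. Everything else --- in particular the identification of the part of $\mathcal{T}$ lying strictly above $L$ with $\mathcal{Q}\setminus\overline{DD'}$, and the scaling identities --- is routine.
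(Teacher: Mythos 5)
Your proof is correct, and while it rests on the same skeleton as the paper's --- a dyadic decomposition of $\nu_{\gamma}$ into annular pieces near the origin, the unit-scale input from Theorem 3, and summation of a geometric series that converges exactly off the line $\frac1q=\frac1p-\frac{2n-\gamma}{2n+2}$ --- the mechanism by which you estimate each piece is genuinely different. The paper bounds the $k$-th piece at the two points $(1,1)$ (where $\Vert T_{\nu_{\gamma,k}}\Vert_{1,1}\lesssim 2^{-k(2n-\gamma)}$) and $\left(\frac{2n+1}{2n+2},\frac{1}{2n+2}\right)$ (where the norm grows like $2^{k\gamma}$), interpolates these $k$-dependent bounds twice to get uniform boundedness at $D$ and summable decay on the open segment from $D$ to $(1,1)$, and then needs duality to reach the $D'$ side and a final application of the Riesz convexity theorem to fill in the trapezoid. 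You instead exploit the exact covariance of $\gr(\varphi)$ under the parabolic dilations $\delta\cdot(x,t)=(\delta x,\delta^{2}t)$ to write each piece (for $j\ge1$) as $2^{j(\gamma-2n)}$ times a dilate of one fixed measure $\rho$, which yields the operator norm of every piece at every point of the triangle $\mathcal{T}$ in a single stroke, with exponent $(\gamma-2n)+(2n+2)(\frac1p-\frac1q)$; a quick check at the point $(1-\theta)\left(\frac{2n+1}{2n+2},\frac{1}{2n+2}\right)+\theta(1,1)$ shows this equals the paper's interpolated exponent $\gamma-2n\theta$, so the two methods are quantitatively identical where they overlap, but yours dispenses with the duality step and the second interpolation, since the region of convergence is directly $\mathcal{Q}\setminus\overline{DD'}$. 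Your geometric identification of that region with the part of $\mathcal{T}$ strictly above the line through $D$ and $D'$ is also correct (for $0<\gamma<2n$ the point $\left(\frac{2n+1}{2n+2},\frac{1}{2n+2}\right)$ lies strictly below that line and $D$, $D'$ lie strictly between it and $(1,1)$, $(0,0)$ respectively on the two slanted edges). What the scaling argument buys is economy and transparency about why the segment $\overline{DD'}$ is excluded; what it costs is that it leans on the exact homogeneity of $\varphi$, whereas the paper's interpolation of piecewise bounds would survive perturbations of $\varphi$ that destroy dilation invariance but preserve the measure bounds $\nu_{\gamma,k}(\mathbb{H}^{n})\sim 2^{-k(2n-\gamma)}$ and the uniform unit-scale estimate.
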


\begin{proof}
For each $k \in \mathbb{N} \cup \{0\}$ we define the sets $A_{k} \subset \mathbb{R}^{2n}$ by
\[
A_{k}= \left\{ y \in \mathbb{R}^{2n} : 2^{-k} < \left\vert y \right\vert \leq 2^{-k+1} \right\}
\]
Let $\nu_{\gamma, k}$ be the fractional Borel measure given by
\[
\nu_{\gamma, k}(E)= \int_{A_{k}} \chi_{E} \left(y, \varphi(y) \right) \eta \left( y \right) | y |^{- \gamma} dy
\]
and let $T_{\nu_{\gamma, k}}$ be its corresponding convolution operator, i.e: $T_{\nu_{\gamma, k}}f=f \ast \nu_{\gamma, k}$.
Now, it is clear that  $\nu_{\gamma}=\sum_{k}\nu_{\gamma, k}$ and $\left\Vert T_{\nu_{\gamma}} \right\Vert_{p,q} \leq \sum_{k} \left\Vert T_{\nu_{\gamma, k}} \right\Vert_{p,q}$.
For $f\geq 0$ we have that
$$
\int_{\mathbb{H}^{n}} f(y,s) d\nu_{\gamma, k}(y,s) \leq 2^{k\gamma}\int_{\mathbb{R}^{2n}} f\left(y, \varphi(y) \right) \eta \left( y \right) dy.$$
Thus $\left\Vert T_{\nu_{\gamma, k}} \right\Vert_{p,q} \leq c 2^{k\gamma}\left\Vert T_{\nu_{0}} \right\Vert_{p,q}$, from Theorem 4 it follows that
\[
\left\Vert T_{\nu_{\gamma, k}} \right\Vert_{\frac{2n+2}{2n+1},2n+2} \leq c 2^{k\gamma}.
\]
It is easy to check that $\left\Vert T_{\nu_{\gamma, k}}
\right\Vert_{1,1}\leq \left\vert \nu_{\gamma, k} (\mathbb{R}^{2n+1})
\right\vert \sim \int_{A_{k}} \left\vert y \right\vert^{-\gamma} dy= c 2^{-k(2n-\gamma)}.$ \\
For $0< \theta <1$, we define
$$\left(\frac{1}{p_{\theta}}, \frac{1}{q_{\theta}} \right) = \left(\frac{2n+1}{2n+2}, \frac{1}{2n+2} \right) (1-\theta) + (1,1)\theta.$$
By the Riesz convexity Theorem we have
$$
\left\Vert T_{\nu_{\gamma, k}} \right\Vert_{p_{\theta},q_{\theta}} \leq c
2^{k\gamma(1-\theta)-k(2n-\gamma) \theta}.$$
Choosing $\theta$ such that $k\gamma(1-\theta)-k(2n-\gamma) \theta = 0$ yields $\displaystyle{\sup_{k \in \mathbb{N}}} \left\Vert T_{\nu_{\gamma, k}}
\right\Vert_{p_{\theta},q_{\theta}} \leq c < \infty$. A simple computation gives $\theta=\frac{2n-\gamma}{2n}$, then $\left( \frac{1}{p_{\theta}}, \frac{1}{q_{\theta}} \right) = \left( \frac{1}{p_{D}}, \frac{1}{q_{D}} \right)$, so $\left\Vert T_{\nu_{\gamma, k}} \right\Vert_{p_{D},q_{D}} \leq c$,
where $c$ is independent of $k$. Interpolating once again, but now between the points $\left(\frac{1}{p_{D}}, \frac{1}{q_{D}} \right)$ and $(1,1)$ we obtain, for each $0< \tau <1$ fixed
$$
\left\Vert T_{\nu_{\gamma, k}} \right\Vert_{p_{\tau},q_{\tau}} \leq c 2^{-k(2n-\gamma) \tau}.$$
Since $\left\Vert T_{\nu_{\gamma}} \right\Vert_{p,q} \leq \sum_{k} \left\Vert T_{\nu_{\gamma, k}} \right\Vert_{p,q}$ and $0< \gamma < 2n$, it follows that
\begin{equation*}
\left\Vert T_{\nu_{\gamma}} \right\Vert_{p_{\tau},q_{\tau}} \leq c\sum_{k \in \mathbb{N}} 2^{-k(2n-\gamma) \tau} <\infty.
\end{equation*}
By duality we also have
\begin{equation*}
\left\Vert T_{\nu_{\gamma}} \right\Vert_{\frac{q_{\tau}}{q_{\tau}-1},\frac{p_{\tau}}{p_{\tau}-1}}\leq c_{\tau} <\infty. 
\end{equation*}
Finally, the theorem follows from the Riesz convexity Theorem, and the restrictions that appear in (\ref{restriccion 1}) and (\ref{restriccion 2}).
\end{proof}

We conclude this note with the following remarks. 

\begin{remark}
Let $\nu_0$ be the measure of compact support defined by $(\ref{nu})$, but now with $\det(2A \pm J) = 0$. In this case, by Theorem 1.1 in \cite{ricci2} and Proposition 2, we can assure that the type set $E_{\nu_0}$ has a non-empty interior. 
\end{remark}

\begin{remark}
Lemma 1 provides us of examples of diagonal matrices $A$ such that $\det(2A \pm J)=0$. By the above remark we know that the interior of the type set of the measure $\nu_0  = \eta \mu_{A}$ is non-empty.  If $n \geq 2$ and $A$ also satisfies that 
$\varphi(y)=y^{t} Ay= \sum_{j=1}^{n} \alpha_j |y_j|^{2}$ 
$( \alpha_j \in \mathbb{R}$ and $y_j \in \mathbb{R}^{2})$, then the type set of $\nu_0$ is the closed triangle with vertices $(0,0)$, $(1,1)$ and 
$(\frac{2n+1}{2n+2}, \frac{1}{2n+2})$. This result is independent of the value of $\det(2A \pm J)$ $($see Theorem 1, pp. 102 in \cite{G-R}$)$.
\end{remark}

These final comments illustrate the limits of the techniques used in this note as well as of those developed in the works \cite{G-R} and \cite{G-R2}.

\

\textbf{Acknowledgement.} I express my thanks to Prof. Fulvio Ricci for his numerous useful suggestions. My thanks also go to the
referee for the useful suggestions and corrections which made the manuscript more readable.

\

\end{document}